\theoremstyle{plain}
\newtheorem{thm}{Theorem}
\newtheorem{lemma}{Lemma}
\newtheorem{cor}{Corollary}
\title{On the equivalence of the BMO-norm of divergence-free vector fields and norm of related paracommutators}
\author{M.~N.~Demchenko\footnote{St.~Petersburg Department of
V.\,A.~Steklov Institute of Mathematics of
the Russian Academy of Sciences, 
27 Fontanka, St.~Petersburg, Russia. E-mail: demchenko@pdmi.ras.ru.\newline
\indent The research was supported by the RFBR grant 20-01-00627-a.}}
\date{}
\begin{document}
\maketitle

\begin{abstract}
We establish an estimate of the BMO-norm of a divergence-free vector field in ${\mathbb R}^3$
in terms of the operator norm of an associated paracommutator. The latter is essentially a $\Psi$DO,
whose symbol depends linearly on the vector field.
Together with the result of P.~Auscher and M.~Taylor concerning the converse estimate,
this provides an equivalent norm in the space of divergence-free fields from BMO.
\smallskip

\noindent \textbf{Keywords:} 
BMO, paracommutators, divergence-free fields. 
\end{abstract}

\section{Introduction}
Let $I$ be a singular integral operator of convolution type bounded in $L_2({\mathbb R}^d)$.
Its compositions $I u$ and $u I$ with a pointwise multiplier $u\in L_\infty({\mathbb R}^d)$, and therefore, the commutator 
\begin{equation}
  I_u = [u, I],
  \label{Ab}
\end{equation}
are bounded in $L_2({\mathbb R}^d)$ as well, and the corresponding operator norm can trivially be estimated in terms of $\|u\|_{L_\infty}$.
As is well known from harmonic analysis,
under some additional assumptions on $I$,
the commutator  $I_u$ can be estimated as follows~\cite{CRW}: 
\begin{equation}
  \|I_u\| \leqslant C \|u\|_{\rm BMO},
  \label{est}
\end{equation}
where $C$ depends only on $d$, $I$
(we recall the definition of the space BMO and some of its properties in sec.~\ref{BMO}).
This fact was generalized to a certain class of paradifferential operators $I_u$ (paracommutators),
which depend on the coefficient $u$ in a more general way, than it is prescribed by~(\ref{Ab}).
In particular, this is true for the operators studied in the present paper, which have the following form.
Consider the Weyl's decomposition for vector-functions in ${\mathbb R}^3$~\cite[Theorems~1.1, 2.1]{BS}:
\[
  L_2({\mathbb R}^3; {\mathbb C}^3) = G \oplus J,
\]
where the subspaces $G$ and $J$ are defined as follows
\begin{gather*}
  G = \overline{G_0}^{L_2}, 
  \quad
  G_0 = \{\partial\varphi\,|\, \varphi\in C_0^\infty({\mathbb R}^3)\},\\
  J = \overline{\{u \in C_0^\infty({\mathbb R}^3; {\mathbb C}^3)\,|\, {\rm div} u = 0\}}^{L_2}.
\end{gather*}
To a vector-function $u\in L_{2,\,\rm loc}({\mathbb R}^3; {\mathbb C}^3)$,
we associate a linear operator $I_u: G_0 \to G$ acting on $f\in G_0$ as follows
\begin{equation}
  I_u f = P (u\times f),
  \label{mapping}
\end{equation}
where $\times$ is the vector product, $P$ is the orthogonal projection on $G$ acting in $L_2({\mathbb R}^3; {\mathbb C}^3)$.
Observe that $u\times f \in L_2({\mathbb R}^3; {\mathbb C}^3)$, which means that $I_u f$ is well-defined.
The operator $I_u$ assumes a continuation to a bounded operator in $G$,
provided that the norm $\|I_u\|$ is finite, the latter being defined by
\[
  \|I_u\| = \sup_{f\in G_0\setminus\{0\}} \frac{\|I_u f\|_{L_2}}{\|f\|_{L_2}}.
\]
(Note also that if $\|I_u\|<\infty$ and $u$ is a real-valued vector-function, then
$I_u$ is a bounded skew-symmetric operator, i.e. $I_u^* = -I_u$.)
The operator $I_u$ satisfies estimate~(\ref{est}), if $u\in{\rm BMO}$~\cite{AT}, \cite[chap.~3, sec.~8]{Taylor}. 

The present paper concerns the converse of estimate~(\ref{est}) for the operators of the form~(\ref{mapping}): 
\begin{equation}
  \|u\|_{\rm BMO} \leqslant C \|I_u\|.
  \label{west}
\end{equation}
Such estimates are known 
in the case when a linear mapping $u \mapsto I_u$ 
satisfies a certain nondegeneracy condition (see~\cite{Janson Peetre} and the literature cited therein).
In particular, for the commutator~(\ref{Ab}), this result was established in~\cite{Janson}.
To our knowledge, none of the previous results applies when $I_u$ is defined by relation~(\ref{mapping}).
In fact, such a relation of the field $u$ and the operator $I_u$ is degenerate, which
is demonstrated by
\begin{thm} \label{thm0}
  Let $u\in L_{2,\,\rm loc}({\mathbb R}^3; {\mathbb C}^3)$ satisfy the relation ${\rm curl}\, u = 0$. Then $I_u = 0$.
\end{thm}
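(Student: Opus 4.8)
The plan is to reduce the claim to a distributional identity of vector calculus. By definition, $I_u f = P(u\times f)$ with $P$ the orthogonal projection of $L_2({\mathbb R}^3;{\mathbb C}^3)$ onto $G$; since $G^\perp = J$, proving $I_u = 0$ amounts to showing that $u\times f\perp G$ for every $f\in G_0$. Write $f = \partial\varphi = \nabla\varphi$ with $\varphi\in C_0^\infty({\mathbb R}^3)$. Because $u\in L_{2,\,\rm loc}({\mathbb R}^3;{\mathbb C}^3)$ and $\nabla\varphi$ is smooth with compact support, the product $u\times\nabla\varphi$ lies in $L_2({\mathbb R}^3;{\mathbb C}^3)$, so the orthogonality statement is meaningful; moreover $u\times\nabla\varphi\perp G$ is equivalent to $u\times\nabla\varphi\perp G_0$, i.e. to $\langle u\times\nabla\varphi, \nabla\chi\rangle_{L_2} = 0$ for all $\chi\in C_0^\infty({\mathbb R}^3)$, which is precisely the assertion that ${\rm div}\,(u\times\nabla\varphi) = 0$ in the sense of distributions.

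Next I would verify this divergence-free property directly. Formally it follows from ${\rm div}\,(u\times v) = v\cdot{\rm curl}\,u - u\cdot{\rm curl}\,v$ together with ${\rm curl}\,\nabla\varphi = 0$ and the hypothesis ${\rm curl}\,u = 0$. To make this rigorous with $u$ only locally square integrable, test against an arbitrary $\psi\in C_0^\infty({\mathbb R}^3)$: by definition of the distributional divergence, $\langle {\rm div}\,(u\times\nabla\varphi),\psi\rangle = -\int (u\times\nabla\varphi)\cdot\nabla\psi$. Using the scalar triple product identity $(u\times\nabla\varphi)\cdot\nabla\psi = u\cdot(\nabla\varphi\times\nabla\psi)$ and the fact that $\nabla\varphi\times\nabla\psi = {\rm curl}\,(\varphi\,\nabla\psi)$ (since ${\rm curl}\,\nabla\psi = 0$), this equals $-\int u\cdot{\rm curl}\,(\varphi\,\nabla\psi)$. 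As $\varphi\,\nabla\psi\in C_0^\infty({\mathbb R}^3;{\mathbb C}^3)$, the latter integral is exactly the pairing of the distribution ${\rm curl}\,u$ with the test field $\varphi\,\nabla\psi$, hence it vanishes by hypothesis. Therefore ${\rm div}\,(u\times\nabla\varphi) = 0$.

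Combining the two steps: for every $\varphi\in C_0^\infty({\mathbb R}^3)$ the field $u\times\nabla\varphi$ is divergence-free, hence orthogonal to $G$, so $I_u(\nabla\varphi) = P(u\times\nabla\varphi) = 0$; since such $\nabla\varphi$ exhaust $G_0$, we conclude $I_u = 0$.

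The only real subtlety — and the step I would treat most carefully — is the low regularity of $u$: every vector-calculus manipulation above must be read in the weak/distributional sense, and this is legitimate only because the second factor $\nabla\varphi$ (and the auxiliary field $\varphi\,\nabla\psi$) is smooth and compactly supported, so that $u\times\nabla\varphi$ and $u\cdot{\rm curl}\,(\varphi\,\nabla\psi)$ are honest integrable functions and no derivative is ever applied to $u$ except through the single well-defined distribution ${\rm curl}\,u$.
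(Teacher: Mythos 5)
Your proof is correct, and its overall strategy coincides with the paper's: reduce $I_u=0$ to the orthogonality of $u\times\partial\varphi$ to $G$, and then derive that orthogonality from the identity ${\rm div}(u\times f)=\langle{\rm curl}\,u,f\rangle-\langle u,{\rm curl}\,f\rangle$ with $f=\partial\varphi$ curl-free. Where you genuinely differ is in how the low regularity of $u$ is handled. The paper first carries out the integration by parts for $u\in C^1$ and then treats general $u\in L_{2,\,\rm loc}$ by mollification, approximating $u$ in $L_1$ on the compact set ${\rm supp}\,f\cap\,{\rm supp}\,\varphi$ and passing to the limit in the pairing. You instead avoid approximation altogether: the scalar triple product identity $(u\times\nabla\varphi)\cdot\nabla\psi=u\cdot(\nabla\varphi\times\nabla\psi)$ together with $\nabla\varphi\times\nabla\psi={\rm curl}(\varphi\,\nabla\psi)$ shifts every derivative onto a smooth compactly supported test field, so the hypothesis ${\rm curl}\,u=0$ is invoked exactly once, as a distributional pairing against $\varphi\,\nabla\psi\in C_0^\infty({\mathbb R}^3;{\mathbb C}^3)$. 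This is a cleaner one-step argument; the paper's mollification route is slightly longer but has the advantage of reusing a smoothing device that reappears later in the proof of the main theorem. (A cosmetic point only: the paper works with the bilinear form and conjugated test functions $\overline{\partial\varphi}$; since $C_0^\infty$ is stable under conjugation this does not affect your equivalence between $\perp G$ and the vanishing of the distributional divergence.)
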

This simple (as it will be seen from the proof) fact indicates that estimate~(\ref{west}) can be valid only
under some additional assumptions on $u$.
Sufficient conditions are given by
\begin{thm}\label{thm}
  Let $u\in L_{2,\,\rm loc}({\mathbb R}^3; {\mathbb C}^3)$ satisfy the condition
  \begin{equation}
    \int_{{\mathbb R}^3} \frac{|u(x)|}{1+|x|^4}\, dx < \infty,
    \label{converge-int}
  \end{equation}
  the relation ${\rm div} u = 0$, and $\|I_u\|<\infty$.
  Then $u$ belongs to BMO and satisfies estimate~(\ref{west}).
\end{thm}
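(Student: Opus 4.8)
The plan is to reduce the whole estimate to an elementary identity for $I_u$ and then to invoke a weak factorization of the divergence-free part of the Hardy space $\mathcal H^1({\mathbb R}^3)$. First I would record that, for $\varphi,\psi\in C_0^\infty({\mathbb R}^3)$,
\[
  \langle I_u\nabla\varphi,\nabla\psi\rangle_{L_2}
  \;=\;\langle u\times\nabla\varphi,\nabla\psi\rangle_{L_2}
  \;=\;\int_{{\mathbb R}^3} u\cdot(\nabla\varphi\times\overline{\nabla\psi})\,dx ,
\]
where the first equality moves $P$ onto the second argument (one has $P\nabla\psi=\nabla\psi$, since $\nabla\psi\in G_0\subset G$) and the second uses $(a\times b)\cdot c=a\cdot(b\times c)$; the last integral converges absolutely because $\nabla\varphi\times\nabla\psi$ is compactly supported and $u\in L_{1,\,{\rm loc}}$. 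Hence
\[
  \Bigl|\int_{{\mathbb R}^3} u\cdot(\nabla\varphi\times\nabla\psi)\,dx\Bigr|
  \;\le\;\|I_u\|\,\|\nabla\varphi\|_{L_2}\,\|\nabla\psi\|_{L_2}.
\]
(This identity also re-proves Theorem~\ref{thm0}: if ${\rm curl}\,u=0$ then $\int u\cdot(\nabla\varphi\times\nabla\psi)=\langle u,{\rm curl}(\varphi\nabla\psi)\rangle=\langle{\rm curl}\,u,\varphi\nabla\psi\rangle=0$, so $I_u\nabla\varphi\perp G_0$ and, by density of $G_0$ in $G$, $I_u=0$.) It therefore remains to recover the local mean oscillation of $u$ from pairings with fields of the special form $\nabla\varphi\times\nabla\psi$.

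Next I would fix a cube $Q\subset{\mathbb R}^3$ and a test field $b\in L_\infty(Q;{\mathbb C}^3)$ with $\int_Q b=0$, extended by zero to ${\mathbb R}^3$, and split $b=Pb+(b-Pb)$ into its gradient part and its divergence-free part. The gradient part has the form $Pb=\nabla q$ with $q=\Gamma*{\rm div}\,b$, where $\Gamma$ is the Newtonian potential of ${\mathbb R}^3$; since ${\rm div}\,b$ has vanishing moments up to order one --- the first-order ones vanish precisely because $\int_Q b=0$ --- one gets $Pb(x)=O(|x|^{-4})$ as $|x|\to\infty$, so by hypothesis~\re{converge-int} the integral $\int_{{\mathbb R}^3} u\cdot Pb\,dx$ converges absolutely and, as ${\rm div}\,u=0$, equals zero (integrate by parts after truncating $q$, using the decay). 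Hence $\int_Q u\cdot b=\int_{{\mathbb R}^3} u\cdot(b-Pb)\,dx$, where $b-Pb$ is a divergence-free element of $\mathcal H^1({\mathbb R}^3;{\mathbb C}^3)$ with $\|b-Pb\|_{\mathcal H^1}\lesssim|Q|\,\|b\|_{L_\infty}$ (because $b$ is a multiple of an $\mathcal H^1$-atom and the projection $b\mapsto b-Pb$ onto divergence-free fields is a matrix of Riesz transforms, hence bounded on $\mathcal H^1$). Since
\[
  \sup\Bigl\{\,|Q|^{-1}\Bigl|\int_Q u\cdot b\,\Bigr|\ :\ \|b\|_{L_\infty}\le1,\ \int_Q b=0\,\Bigr\}\;\gtrsim\;|Q|^{-1}\int_Q|u-u_Q|,
\]
the theorem will follow once we prove $\bigl|\int_{{\mathbb R}^3} u\cdot h\,\bigr|\lesssim\|I_u\|\,\|h\|_{\mathcal H^1}$ for every divergence-free $h\in\mathcal H^1$.

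For the latter I would use a weak ``div--curl'' factorization of the divergence-free Hardy space: every divergence-free $h\in\mathcal H^1({\mathbb R}^3;{\mathbb C}^3)$ admits a representation $h=\sum_k\nabla\varphi_k\times\nabla\psi_k$ with $\varphi_k,\psi_k\in C_0^\infty$, convergent in $\mathcal H^1$, and $\sum_k\|\nabla\varphi_k\|_{L_2}\|\nabla\psi_k\|_{L_2}\le C\|h\|_{\mathcal H^1}$. The bound $\|\nabla\varphi\times\nabla\psi\|_{\mathcal H^1}\lesssim\|\nabla\varphi\|_{L_2}\|\nabla\psi\|_{L_2}$ --- the Coifman--Lions--Meyer--Semmes estimate, which, via the duality between $\mathcal H^1$ and ${\rm BMO}$, is also a route to the Auscher--Taylor inequality $\|I_u\|\lesssim\|u\|_{\rm BMO}$ in this setting --- is the easy half; the quantitative factorization is the hard half, to be proved by a stopping-time/iteration scheme adapted from that of Coifman--Lions--Meyer--Semmes to divergence-free vector fields. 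Granting it and combining with the identity of the first paragraph,
\[
  \Bigl|\int_{{\mathbb R}^3} u\cdot h\,dx\Bigr|
  =\Bigl|\sum_k\langle I_u\nabla\varphi_k,\nabla\psi_k\rangle\Bigr|
  \le\|I_u\|\sum_k\|\nabla\varphi_k\|_{L_2}\|\nabla\psi_k\|_{L_2}
  \le C\|I_u\|\,\|h\|_{\mathcal H^1},
\]
and feeding $h=b-Pb$ back into the previous step gives $|Q|^{-1}\int_Q|u-u_Q|\lesssim\|I_u\|$ for every cube, i.e.\ $u\in{\rm BMO}$ and~\re{west}.

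The principal obstacle is twofold. First, the factorization above in its precise quantitative form --- with $\ell^1$ control of $\sum_k\|\nabla\varphi_k\|_{L_2}\|\nabla\psi_k\|_{L_2}$ and for divergence-free vector fields --- is the technical core of the argument; it plays, in the degenerate situation at hand, the part taken in the paracommutator literature by the nondegeneracy hypotheses used there to derive lower bounds of type~\re{west}. Second, interchanging the pairing $h\mapsto\int u\cdot h$ with the infinite sum needs care: a priori we do not yet know $u\in{\rm BMO}$, so this pairing is not known to be continuous on $\mathcal H^1$, and even when $h=b-Pb$ is essentially localized near $Q$ the individual fields $\nabla\varphi_k\times\nabla\psi_k$ need not be, since $b-Pb$ decays only like $|x|^{-4}$. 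I would handle this by performing the factorization with quantitative control of the locations and sizes of the supports of the $\varphi_k,\psi_k$, so that the tails of $\sum_k\nabla\varphi_k\times\nabla\psi_k$ are small not only in $\mathcal H^1$ but also against the weight $(1+|x|)^{-4}$, and then appealing to~\re{converge-int} once more to pass to the limit. A minor point is the reduction to real-valued $u$ and the treatment of the conjugations in the $L_2$-pairing, which cost only harmless constants.
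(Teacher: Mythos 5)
Your strategy---test $I_u$ against gradients to obtain the bilinear bound $\bigl|\int_{{\mathbb R}^3}u\cdot(\nabla\varphi\times\overline{\nabla\psi})\,dx\bigr|\le\|I_u\|\,\|\nabla\varphi\|_{L_2}\|\nabla\psi\|_{L_2}$, and then recover the mean oscillation of $u$ by duality against divergence-free $\mathcal H^1$ fields---is genuinely different from the paper's. The paper never leaves the Littlewood--Paley world: it uses the Carleson-measure characterization \re{BMOCarleson} of BMO, chooses the test field $f$ equal to $e_\beta$ on a fixed dilation of the cube $Q$ so that $\Phi_t*I_uf$ restricted to $Q$ is \emph{exactly} a component of a modified square function $u^\alpha_t=\Phi^\alpha_t\,\vec*\,u$ of $u$ itself, and then reassembles $\Psi_t*u$ from the $u^\alpha_t$ by an elementary Fourier identity valid for divergence-free fields. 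That route is entirely self-contained and elementary; yours, if completed, would exhibit the result as an instance of $\mathcal H^1$--BMO duality plus weak factorization, in the spirit of Coifman--Rochberg--Weiss and Coifman--Lions--Meyer--Semmes.

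However, as written your proof has a genuine gap precisely where you locate ``the technical core'': the quantitative weak factorization of every divergence-free $h\in\mathcal H^1({\mathbb R}^3;{\mathbb C}^3)$ as $h=\sum_k\nabla\varphi_k\times\nabla\psi_k$ with $\sum_k\|\nabla\varphi_k\|_{L_2}\|\nabla\psi_k\|_{L_2}\le C\|h\|_{\mathcal H^1}$, and moreover with support control strong enough to justify interchanging the sum with the pairing against $u$ (which, as you correctly observe, is not a priori continuous on $\mathcal H^1$ and is only controlled through the weight in \re{converge-int}). This is not an off-the-shelf citation: what Coifman--Lions--Meyer--Semmes provide is the ``easy half'' ($\|\nabla\varphi\times\nabla\psi\|_{\mathcal H^1}\lesssim\|\nabla\varphi\|_{L_2}\|\nabla\psi\|_{L_2}$) together with weak-$*$ generation statements for the \emph{scalar} Hardy space by div--curl products; the $\ell^1$-summable, localized, vector-valued factorization of the divergence-free subspace by cross products of gradients is an unproved assertion, and --- as you yourself note --- it is exactly the surrogate for the nondegeneracy hypotheses of the paracommutator literature, i.e.\ it is where all of the difficulty of the degenerate map $u\mapsto I_u$ lives. (The secondary issues --- showing $\int u\cdot Pb=0$ via a smooth truncation of the Newton potential, whose decay $O(|x|^{-4})$ matches \re{converge-int}, and the handling of conjugates --- are real but fixable.) Until the factorization is actually proved, the argument does not close; the paper's square-function route circumvents it entirely.
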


It should be stressed that the condition $u\in L_{2,\,\rm loc}({\mathbb R}^3; {\mathbb C}^3)$ concerning local behavior of $u$
and condition~(\ref{converge-int}) concerning behavior of $u$ at the infinity,
which both occur in Theorem~\ref{thm}, 
are not too restrictive in the sense that
they are satisfied by any $u$ from BMO.
Thus in view of estimate~(\ref{est}) for the operators of the form~(\ref{mapping}) and Theorem~\ref{thm}, we obtain
\begin{cor}\label{cor}
  There are positive constants $c$, $C$, such that
  for any $u$ from BMO satisfying ${\rm div} u = 0$, we have
  \[
    c \|I_u\| \leqslant \|u\|_{\rm BMO} \leqslant C \|I_u\|.
  \]
\end{cor}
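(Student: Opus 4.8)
The plan is to derive the Corollary by simply combining the two inequalities already available to us, so that the only point requiring attention is the verification that the hypotheses of Theorem~\ref{thm} are automatically fulfilled by every $u\in{\rm BMO}$.

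For the lower bound $c\|I_u\|\leqslant\|u\|_{\rm BMO}$ there is nothing to do beyond quoting estimate~(\ref{est}), which by the result of Auscher and Taylor recalled in the introduction holds for the operators of the form~(\ref{mapping}): it gives $\|I_u\|\leqslant C_1\|u\|_{\rm BMO}$, hence the claim with $c=C_1^{-1}$. In particular this already shows $\|I_u\|<\infty$ for every $u\in{\rm BMO}$, so that hypothesis of Theorem~\ref{thm} is met as well.

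For the upper bound I would invoke Theorem~\ref{thm}, for which it remains to check that a divergence-free $u\in{\rm BMO}$ satisfies $u\in L_{2,\,\rm loc}({\mathbb R}^3;{\mathbb C}^3)$ and the integrability condition~(\ref{converge-int}). Local square-integrability is immediate from the John--Nirenberg inequality (a BMO function lies in $L_{p,\,\rm loc}$ for every $p<\infty$, in particular in $L_{2,\,\rm loc}$, and a fortiori in $L_{1,\,\rm loc}$). For~(\ref{converge-int}) I would use the classical logarithmic growth bound for BMO functions, obtained by telescoping the estimates $|u_{B(0,2^{k+1})}-u_{B(0,2^{k})}|\leqslant 8\|u\|_{\rm BMO}$ for the averages over the balls $B(0,2^k)$: this yields $|u_{B(0,R)}|\leqslant A(1+\log^{+}R)\|u\|_{\rm BMO}+|u_{B(0,1)}|$ and hence
$\int_{B(0,R)}|u|\leqslant |B(0,R)|\bigl(\|u\|_{\rm BMO}+|u_{B(0,R)}|\bigr)\lesssim R^{3}(1+\log^{+}R)$.
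Splitting $\{|x|\geqslant 1\}$ into the dyadic annuli $\{2^{k}\leqslant|x|<2^{k+1}\}$ and bounding $(1+|x|^4)^{-1}\lesssim 2^{-4k}$ there, the contribution of the $k$-th annulus is $\lesssim 2^{-4k}\cdot 2^{3k}(1+k)\asymp 2^{-k}(1+k)$, which is summable; the part over $\{|x|<1\}$ is controlled by $\int_{B(0,1)}|u|<\infty$. Thus~(\ref{converge-int}) holds, Theorem~\ref{thm} applies, and we obtain $\|u\|_{\rm BMO}\leqslant C\|I_u\|$ with $C$ the constant from that theorem.

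Since the Corollary is a purely formal consequence of~(\ref{est}) and Theorem~\ref{thm}, there is no genuine obstacle; the single step deserving care is the growth estimate guaranteeing~(\ref{converge-int}), and even that is entirely standard.
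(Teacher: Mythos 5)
Your proposal is correct and follows essentially the same route as the paper: Corollary~\ref{cor} is obtained there by combining estimate~(\ref{est}) with Theorem~\ref{thm} after noting that the hypotheses $u\in L_{2,\,\rm loc}$ and~(\ref{converge-int}) hold automatically for BMO functions (facts the paper records in sec.~\ref{BMO} without proof). Your explicit verification of~(\ref{converge-int}) via the logarithmic growth of dyadic averages is the standard argument and is carried out correctly.
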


\section{Proof of Theorem~\ref{thm0}}
It suffices to show that under the assumptions of Theorem~\ref{thm0},
the field $u\times f$ is orthogonal to $G$ for any $f\in G_0$.
For $u\in C^1({\mathbb R}^3; {\mathbb C}^3)$, this follows from the fact that for an arbitrary function $\varphi\in C_0^\infty({\mathbb R}^3)$, we have
\[
  \int_{{\mathbb R}^3} \langle{}u\times f, \overline{\partial\varphi}\rangle dx = -\int_{{\mathbb R}^3} {\rm div}(u\times f) \overline\varphi\, dx = 0
\]
(here and further $\langle{}u,v\rangle$ is the bilinear form in ${\mathbb C}^3$ defined on the standard orthonormal frame 
$\{e_\alpha\}_{\alpha=1,2,3}$ by the equality $\langle{}e_\alpha, e_\beta\rangle = \delta_{\alpha\beta}$).
In the last equality we used the identity
\begin{equation}
  {\rm div}(u\times f) = \langle{\rm curl}\, u, f\rangle - \langle{}u, {\rm curl}\, f\rangle
  \label{divcross}
\end{equation}
and the relation ${\rm curl}\, f = 0$.

In the case of nonsmooth $u$, one can apply a smoothing mollifier, which provides smooth functions $u^\varepsilon$, $\varepsilon>0$, 
${\rm curl}\, u^\varepsilon=0$,
approximating $u$ in the $L_1$-norm on the intersection of the supports of $f$ and $\varphi$.
We have
\begin{multline*}
  \int_{{\mathbb R}^3} \langle{}u\times f, \overline{\partial\varphi}\rangle dx = \int_{{\mathbb R}^3} \langle{}u, f\times\overline{\partial\varphi}\rangle dx =
  \lim_{\varepsilon\to 0} \int_{{\mathbb R}^3} \langle{}u^\varepsilon, f\times\overline{\partial\varphi}\rangle dx \\
  =\lim_{\varepsilon\to 0} \int_{{\mathbb R}^3} \langle{}u^\varepsilon\times f, \overline{\partial\varphi}\rangle dx = 0.
\end{multline*}

\section{BMO space} 
\label{BMO}
A function $u\in L_{1,\,\rm loc}({\mathbb R}^d; {\mathbb C}^N)$ is said to belong to BMO if
\begin{equation}
  \|u\|_{\rm BMO} = \sup_Q |Q|^{-1}\int_Q |u(x)-u_Q|\, dx < \infty,
  \label{BMOnorm}
\end{equation}
where the supremum is taken over all cubes $Q\subset{\mathbb R}^d$,
\[
  u_Q = |Q|^{-1}\int_Q u(x)\, dx.
\]
For constant functions $u=\rm const$ (and only for them), we have $\|u\|_{\rm BMO} = 0$.
Thus the functional $\|u\|_{\rm BMO}$ is a {\em seminorm}, rather than a norm.
However, we will call it the BMO-norm. 

Functions from BMO belong to $L_{p,\,\rm loc}({\mathbb R}^d)$ for any $1\leqslant p < \infty$, 
and the norm~(\ref{BMOnorm}) is equivalent to
\[
  \sup_Q \left(|Q|^{-1}\int_Q |u(x)-u_Q|^p\, dx\right)^{1/p}.
\]
Recall that we have already used the fact that ${\rm BMO} \subset L_{2,\,\rm loc}({\mathbb R}^d)$ when we obtained Corollary~\ref{cor}.

We will need certain characterizations of $L_2$ and BMO spaces, which differ from their standard definitions.
Let a real-valued radially symmetric function $\Phi$ from $C_0^\infty({\mathbb R}^d)$ 
satisfy the following conditions
\begin{equation}
  \Phi \not\equiv 0, \quad \widehat\Phi(0) = 0
  \label{condPhi}
\end{equation}
($\hat f$ denotes Fourier transform of $f$).
In this case, we have
\begin{equation}
  \int_0^\infty \widehat\Phi(t \xi)^2 \frac{dt}{t} = c^2>0, \quad \xi\ne 0
  \label{S}
\end{equation}
(due to~(\ref{condPhi}), the integral converges and $c$ is positive).
Then $L_2$ space can be characterized in terms of convolutions of its functions with the functions
\begin{equation}
  \Phi_t(x) = t^{-d} \Phi(t^{-1} x), \quad t > 0.
  \label{Phik}
\end{equation}
Namely, for any function $u\in L_2({\mathbb R}^d)$, we have
\begin{equation}
  \left(\int_0^\infty \|\Phi_t * u\|_{L_2}^2 \frac{dt}{t}\right)^{1/2} = c \|u\|_{L_2},
  \label{PhikL2}
\end{equation}
where $c$ is the same constant as in equality~(\ref{S}).
This fact is a trivial consequence of Plancherel's theorem,
since $\widehat\Phi_t(\xi) = \widehat \Phi(t \xi)$.
Thus the convolutions $\Phi_t * u$ provide an equivalent norm in $L_2$ space.
As is known, various function spaces are characterized in terms of such convolutions~\cite{Stein, Triebel, Grafakos},
which is far less trivial than the same fact for $L_2$ space.
This concerns Sobolev spaces (or, more generally, Triebel-Lizorkin spaces),
Besov's spaces, Hardy space, and BMO space.
In most cases, conditions similar to~(\ref{condPhi}) (more strong ones, as a rule) are imposed on $\Phi$.
Now we give a characterization of BMO space, which will be used later.
For any $u$ from BMO, we have~\cite{Stein}
\[
  \sup_Q \left(\int_0^{l(Q)} \|\Phi_t * u\|_Q^2\, \frac{dt}{t}\right)^{1/2} \leqslant C \|u\|_{\rm BMO},
\]
where $l(Q)$ is the side length of the cube $Q$, 
and the functional $\|\cdot\|_Q$ is defined as follows
\[
  \|u\|_Q = \left(|Q|^{-1} \int_Q |u|^2 dx\right)^{1/2}.
\]
The converse estimate holds as well:
if a function $u\in L_{1,\,\rm loc}({\mathbb R}^d; {\mathbb C}^N)$ satisfies condition
\begin{equation}
  \int_{{\mathbb R}^d} \frac{|u(x)|}{1+|x|^{d+1}}\, dx < \infty,
  \label{converge-int-d}
\end{equation}
then~\cite{Stein}
\begin{equation}
  \|u\|_{\rm BMO} \leqslant C \sup_Q \left(\int_0^{l(Q)} \|\Phi_t * u\|_Q^2\, \frac{dt}{t}\right)^{1/2}.
  \label{BMOCarleson}
\end{equation}
Condition~(\ref{converge-int}) in Theorem~\ref{thm} is nothing else than condition~(\ref{converge-int-d}) in the case $d=3$.
For any function from BMO, condition~(\ref{converge-int-d}) holds automatically.
Thus the supremum occurring in the last two estimates yields an equivalent norm in BMO space.

\section{Auxiliary assertions} 
In the proof of Theorem~\ref{thm}, estimate~(\ref{BMOCarleson}) will be used.
However, 
$\Phi$ will be replaced by the function $\Psi = \Phi * \Phi$ with $\Phi = \Delta\Theta$, 
where 
\begin{equation}
  \Theta \in C_0^\infty(\{x\in{\mathbb R}^3\,|\,\, |x|<1/2\})
  \label{PhiTe}
\end{equation}
is a real-valued radially symmetric function, $\Theta\not\equiv 0$. 
Clearly, $\Phi, \Psi \in C_0^\infty({\mathbb R}^3)$. 
Besides, the functions $\Phi$ and $\Psi$ satisfy conditions~(\ref{condPhi}). 

For $\alpha=1,2,3$, introduce the following vector-functions
\[
  \Phi^{\alpha} = \partial \partial_\alpha \Theta. 
\]
Let $\Phi_t$, $\Phi^\alpha_t$, $\Psi_t$ ($t>0$) be the families of functions related to $\Phi$, $\Phi^\alpha$, $\Psi$,
respectively, by equality of the form~(\ref{Phik}) with $d=3$.

To a vector-function $u$, we associate a family of vector-functions
\[
  u^\alpha_t = \Phi^\alpha_t \,\vec{*}\, u, \quad \alpha=1,2,3, \quad t>0,
\]
where the ``vector convolution'' operation $\vec{*}$ is defined by
\[
  (u \,\vec{*}\, v)(x) = \int_{{\mathbb R}^3} u(x-y) \times v(y)\, dy.
\]

In the proof of Theorem~\ref{thm}, we will estimate the convolutions $\Psi_t * u$.
To this end, we will need the following lemma, which relates them to the functions $u^\alpha_t$.
\begin{lemma}
For $u\in L_{1,\,\rm loc}({\mathbb R}^3; {\mathbb C}^3)$, ${\rm div} u = 0$, $t>0$, we have
\begin{equation}
  \Psi_t * u = -\sum_{\alpha=1,2,3} \Phi^\alpha_t \,\vec{*}\, u^\alpha_t.
  \label{PsiPhi}
\end{equation}
\end{lemma}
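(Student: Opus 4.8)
The plan is to verify the identity \re{PsiPhi} on the Fourier side, since all the functions involved are convolutions and the statement is linear in $u$. First I would compute the Fourier transforms of the building blocks: since $\Phi = \Delta\Theta$ we have $\widehat\Phi(\xi) = -|\xi|^2\widehat\Theta(\xi)$, hence $\widehat\Psi(\xi) = \widehat\Phi(\xi)^2 = |\xi|^4\widehat\Theta(\xi)^2$, and $\widehat{\Phi^\alpha}(\xi) = -i\xi\, i\xi_\alpha\,\widehat\Theta(\xi) = \xi\,\xi_\alpha\,\widehat\Theta(\xi)$ (a vector-valued symbol). Under the scaling \re{Phik} these become $\widehat{\Psi_t}(\xi) = \widehat\Psi(t\xi)$, $\widehat{\Phi^\alpha_t}(\xi) = \widehat{\Phi^\alpha}(t\xi)$. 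The vector convolution $\vec*$ has Fourier transform $\widehat{u\,\vec*\,v}(\xi) = \widehat u(\xi)\times\widehat v(\xi)$, so the right-hand side of \re{PsiPhi} transforms to $-\sum_\alpha \widehat{\Phi^\alpha}(t\xi)\times\bigl(\widehat{\Phi^\alpha}(t\xi)\times\widehat u(\xi)\bigr)$.

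The key computation is then the double cross product. Writing $\eta = t\xi$ and using the identity $a\times(b\times c) = b\langle a,c\rangle - c\langle a,b\rangle$ applied with $a = b = \widehat{\Phi^\alpha}(\eta) = \eta\,\eta_\alpha\widehat\Theta(\eta)$, I get
\[
  \widehat{\Phi^\alpha}(\eta)\times\bigl(\widehat{\Phi^\alpha}(\eta)\times\widehat u(\xi)\bigr)
  = \eta_\alpha^2\,|\widehat\Theta(\eta)|^2\Bigl(\eta\,\langle\eta,\widehat u(\xi)\rangle - |\eta|^2\,\widehat u(\xi)\Bigr).
\]
Summing over $\alpha$ pulls out $\sum_\alpha \eta_\alpha^2 = |\eta|^2$, giving $|\eta|^2\widehat\Theta(\eta)^2\bigl(\eta\,\langle\eta,\widehat u(\xi)\rangle - |\eta|^2\widehat u(\xi)\bigr)$. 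Here the divergence-free hypothesis enters: ${\rm div}\,u = 0$ means $\langle\xi,\widehat u(\xi)\rangle = 0$, hence also $\langle\eta,\widehat u(\xi)\rangle = t\langle\xi,\widehat u(\xi)\rangle = 0$, so the first term drops and we are left with $-|\eta|^4\widehat\Theta(\eta)^2\,\widehat u(\xi) = -\widehat\Psi(t\xi)\widehat u(\xi) = -\widehat{\Psi_t}(\xi)\widehat u(\xi)$. Therefore the right-hand side of \re{PsiPhi} has Fourier transform $-(-\widehat{\Psi_t}(\xi)\widehat u(\xi)) = \widehat{\Psi_t}(\xi)\widehat u(\xi) = \widehat{\Psi_t * u}(\xi)$, which matches the left-hand side.

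The main obstacle is not the algebra but making the Fourier-transform manipulation legitimate for $u$ that is merely in $L_{1,\rm loc}$, where $\widehat u$ is only a distribution and the convolutions need not lie in any $L_p$ with $p<\infty$. The cleanest fix is to note that both sides of \re{PsiPhi} are continuous functions of $x$ (each is a convolution of $u$ against a fixed test function from $C_0^\infty$, so local integrability of $u$ suffices), and that both sides depend linearly and continuously — in the topology of local uniform convergence — on $u$ restricted to a large ball containing the relevant supports. One then verifies the identity first for $u\in C_0^\infty({\mathbb R}^3;{\mathbb C}^3)$ with ${\rm div}\,u = 0$, where the Fourier argument above is rigorous, and extends to general $u$ by truncation and mollification: given $x$ in a fixed ball, replace $u$ by $\chi u$ with $\chi\in C_0^\infty$ equal to $1$ on a neighborhood of the supports of $\Psi_t(x-\cdot)$ and of the $\Phi^\alpha_t(x-\cdot)$ (both have radius $O(t)$ about $x$), mollify, and project onto divergence-free fields; since the mollification commutes with convolution and the divergence-free projection is implemented by a Fourier multiplier that leaves the already-divergence-free $u$ unchanged in the limit, passing to the limit in the $L_1$-norm on the relevant ball gives \re{PsiPhi} for $u$ itself. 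One should record that $\Psi = \Phi*\Phi$ with $\Phi$ supported in $\{|x|<1/2\}$ (by \re{PhiTe}), so $\Psi$ is supported in $\{|x|<1\}$ and all truncation radii are finite and explicit.
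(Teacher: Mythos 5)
Your Fourier-side computation is correct and is essentially the same as the paper's: the paper likewise writes $\widehat{\Phi^\alpha}(\xi)$ as $\widehat\Phi(\xi)\,\xi_\alpha\,\xi/|\xi|^2$, sums $\xi_\alpha^2$ over $\alpha$ to produce $|\xi|^2$, and uses $\langle \widehat u(\xi),\xi\rangle=0$ to kill the longitudinal term in the double cross product. The problem is in your reduction of the general case to the case where that computation is legitimate.

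The step ``replace $u$ by $\chi u$, mollify, and project onto divergence-free fields'' does not work as stated. The truncation $\chi u$ is \emph{not} divergence-free (${\rm div}(\chi u)=\langle\partial\chi,u\rangle\ne 0$ on the annulus where $\chi$ varies), so the Leray-type projection genuinely changes it: the projected field differs from $\chi u$ by a gradient $\partial p$ with $\Delta p={\rm div}(\chi u)$, and $\partial p$ is harmonic but generically nonzero inside the ball where $\chi\equiv 1$. No limit (mollification parameter, etc.) removes this discrepancy, because the cutoff radius is fixed at $O(t)$ and $u$ is only assumed $L_{1,\rm loc}$, so the error cannot be pushed to infinity either. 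Worse, identity~\re{PsiPhi} is \emph{false} for gradient fields --- for curl-free $w$ the right-hand side vanishes (cf.~\re{wkrot}) while $\Psi_t*w$ does not --- so the gradient error term changes the left-hand side of the identity without changing the right-hand side, and your limiting argument would establish \re{PsiPhi} for $P_J(\chi u)$ at $x$, which is not the same as establishing it for $u$ at $x$. The paper avoids this by cutting off the \emph{vector potential} rather than the field: after mollifying, it writes $u={\rm curl}\,z$ on a ball $B$ around $x$ (possible since ${\rm div}\,u=0$), extends $z$ to a compactly supported smooth field, and sets $v={\rm curl}\,z$; then $v$ is simultaneously compactly supported, globally divergence-free, and equal to $u$ on $B$, so the Fourier computation applies to $v$ and both sides of \re{PsiPhi} at $x$ see only $v|_B=u|_B$. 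You need this (or an equivalent device) to close the argument.
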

\begin{proof}
Since $\Psi_t$, $\Phi^\alpha_t$ are compactly supported functions,
both sides of equa\-lity~(\ref{PsiPhi}) are smooth functions in ${\mathbb R}^3$.
It is possible to approximate $u$ by smooth divergence-free convolutions with an appropriate smooth compactly supported function.
So it will suffice to establish~(\ref{PsiPhi}) for smooth functions $u$.

Now fix an arbitrary point $x\in{\mathbb R}^3$ and choose a closed ball $B$ centered at $x$
(its radius will be specified later).
In the ball $B$, we have
\[
  u = {\rm curl}\, z, \quad z\in C^\infty(B; {\mathbb C}^3),
\]
which is a consequence of the condition ${\rm div} u = 0$.
Choose an arbitrary smooth compactly supported extension of $z$ to ${\mathbb R}^3$, which will be denoted by the same symbol,
and define the field $v = {\rm curl}\, z$ entirely in ${\mathbb R}^3$.
Since $u=v$ in $B$, we will have
\[
  (\Psi_t * u)(x) = (\Psi_t * v)(x), \quad
  \left(\Phi^\alpha_t \,\vec{*}\, u^\alpha_t\right)(x) = \left(\Phi^\alpha_t \,\vec{*}\, v^\alpha_t\right)(x),
\]
as soon as we choose the radius of $B$ sufficiently large
that relations $\Psi_t(x-y) = 0 = \Phi^\alpha_t(x-y)$ hold for any $y\in {\mathbb R}^3\setminus B$.
Therefore we will obtain~(\ref{PsiPhi}) at $x$,
as soon as we establish the same relation for $v$ (incidentally, the latter will be established entirely in ${\mathbb R}^3$
rather than at the single point $x$). 
We have
\[
  \widehat \Phi^\alpha(\xi) = \widehat \Phi(\xi)\, \langle\xi, e_\alpha\rangle \frac{\xi}{|\xi|^2},
  \quad
  \widehat \Phi^\alpha_t(\xi) = \widehat \Phi^\alpha(t\xi) = \widehat \Phi_t(\xi)\, \langle\xi, e_\alpha\rangle \frac{\xi}{|\xi|^2},
\]
whence
\[
  \hat v^\alpha_t(\xi) = \widehat \Phi^\alpha_t(\xi) \times \hat v(\xi) =
  \widehat \Phi_t(\xi)\, \langle\xi, e_\alpha\rangle\, \frac{\xi\times \hat v(\xi)}{|\xi|^2}. 
\]
Therefore
\begin{multline*}
  \left(\sum_{\alpha=1,2,3} \Phi^\alpha_t \,\vec{*}\, v^\alpha_t\right)\widehat{}\,(\xi) =
  \sum_{\alpha=1,2,3} \widehat\Phi^\alpha_t(\xi)\times \hat v^\alpha_t(\xi) =
  \sum_{\alpha=1,2,3} \widehat\Phi_t(\xi)^2 \langle\xi, e_\alpha\rangle^2\, \frac{\xi\times(\xi\times \hat v(\xi))}{|\xi|^4} \\
  =\widehat\Phi_t(\xi)^2\, \frac{\xi\times(\xi\times \hat v(\xi))}{|\xi|^2} =
  -\widehat\Phi_t(\xi)^2 \hat v(\xi) = -\widehat\Psi_t(\xi) \hat v(\xi)
\end{multline*}
(we used the equality $\langle\hat v(\xi), \xi\rangle = 0$, which follows from ${\rm div} v = 0$).
Thus we arrive at relation~(\ref{PsiPhi}) for the function $v$.
\end{proof}

\begin{lemma}
For $u\in C^1({\mathbb R}^3; {\mathbb C}^3)$, we have
\begin{equation}
  u^\alpha_t = t (\partial_\alpha\Theta)_t * {\rm curl}\, u,   
  \quad  
  \alpha=1,2,3, \quad t>0,
  \label{wkrot}
\end{equation}
where $(\partial_\alpha\Theta)_t$ is related to $\partial_\alpha\Theta$ by equality of the form~(\ref{Phik}) with $d=3$.
\end{lemma}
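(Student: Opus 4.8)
The plan is to prove the identity
\[
  u^\alpha_t = \Phi^\alpha_t \,\vec{*}\, u = t\,(\partial_\alpha\Theta)_t * {\rm curl}\, u
\]
by passing to the Fourier side, since $\Phi^\alpha = \partial\partial_\alpha\Theta$ is a differential expression applied to $\Theta$, and the vector convolution with ${\rm curl}\, u$ will produce exactly the same Fourier multiplier. First I would record that, for $u\in C^1({\mathbb R}^3;{\mathbb C}^3)$, the compact support of $\Theta$ (hence of $\Phi^\alpha_t$ and $(\partial_\alpha\Theta)_t$) makes both sides of \eqref{wkrot} smooth and the convolutions well-defined pointwise; as in the previous lemma one may reduce to the case where $u$ itself is a Schwartz function (or has suitably decaying Fourier transform) by approximating $u$ locally by smooth compactly supported divergence-free convolutions, so that Fourier transforms of all quantities make classical sense. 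Actually the cleanest route is to localize exactly as in the proof of the preceding lemma: fix $x$, replace $u$ on a large ball by $v={\rm curl}\, z$ with $z$ smooth and compactly supported, observe that both sides of \eqref{wkrot} at $x$ depend only on the values of $u$ near $x$, and prove the identity for $v$ globally on ${\mathbb R}^3$ via Fourier transform.

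The computation itself is short. From the previous lemma's display we have $\widehat{\Phi^\alpha_t}(\xi) = \widehat{\Phi_t}(\xi)\,\langle\xi,e_\alpha\rangle\,\xi/|\xi|^2$, and since $\Phi=\Delta\Theta$ we have $\widehat\Phi(\xi) = -|\xi|^2\widehat\Theta(\xi)$, hence
\[
  \widehat{\Phi^\alpha_t}(\xi) = -|t\xi|^2\widehat\Theta(t\xi)\,\langle\xi,e_\alpha\rangle\,\frac{\xi}{|\xi|^2}
  = -t^2\,\widehat\Theta(t\xi)\,\langle\xi,e_\alpha\rangle\,\xi.
\]
On the other hand $\widehat{(\partial_\alpha\Theta)_t}(\xi) = \widehat{\partial_\alpha\Theta}(t\xi) = i\langle t\xi,e_\alpha\rangle\widehat\Theta(t\xi) = i t\langle\xi,e_\alpha\rangle\widehat\Theta(t\xi)$, and $\widehat{{\rm curl}\, u}(\xi) = i\xi\times\hat u(\xi)$, so
\[
  \bigl(t\,(\partial_\alpha\Theta)_t * {\rm curl}\, u\bigr)\widehat{\ }(\xi)
  = t\cdot i t\langle\xi,e_\alpha\rangle\widehat\Theta(t\xi)\cdot i\xi\times\hat u(\xi)
  = -t^2\widehat\Theta(t\xi)\langle\xi,e_\alpha\rangle\,\xi\times\hat u(\xi).
\]
Comparing with $\widehat{u^\alpha_t}(\xi) = \widehat{\Phi^\alpha_t}(\xi)\times\hat u(\xi) = -t^2\widehat\Theta(t\xi)\langle\xi,e_\alpha\rangle\,\xi\times\hat u(\xi)$ gives equality of Fourier transforms, and inverting (legitimately, for the localized smooth $v$) yields \eqref{wkrot}.

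The main obstacle, such as it is, is purely a matter of justifying the Fourier-transform manipulations: a priori $u\in C^1$ need not be integrable or tempered, so one cannot transform $u$ directly. This is handled exactly as in the preceding lemma, by the localization argument: on a large ball $B$ around the chosen point $x$ we write $u={\rm curl}\, z$ (possible since ${\rm div}\, u=0$), extend $z$ to a smooth compactly supported field on ${\mathbb R}^3$, set $v={\rm curl}\, z$ globally, note that $u=v$ on $B$ and that the radius of $B$ can be taken large enough that $\Phi^\alpha_t(x-\cdot)$ and $(\partial_\alpha\Theta)_t(x-\cdot)$ vanish outside $B$, so both sides of \eqref{wkrot} evaluated at $x$ are unchanged upon replacing $u$ by $v$; for the Schwartz field $v$ every step above is classical. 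Since $x$ was arbitrary, \eqref{wkrot} follows. (One should also remark that $\xi/|\xi|^2$ and the other negative-homogeneous factors cause no trouble because they always appear multiplied by $|\xi|^2$ coming from $\Phi=\Delta\Theta$, so the multipliers in play are in fact smooth near the origin.)
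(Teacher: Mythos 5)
Your Fourier computation is arithmetically correct, but the route is much heavier than the paper's, and as written it establishes a weaker statement than the lemma claims. The paper's proof is a one--line integration by parts: since
\[
  \Phi^\alpha_t = t\,\partial\bigl((\partial_\alpha\Theta)_t\bigr),
\]
one has $u^\alpha_t = \bigl(t\,\partial(\partial_\alpha\Theta)_t\bigr)\,\vec{*}\,u = t\,(\partial_\alpha\Theta)_t * {\rm curl}\, u$, using only the scalar identity $(\partial f)*g = f*\partial g$ componentwise: throwing the derivatives inside the vector convolution onto $u$ reassembles them into ${\rm curl}\,u$. No Fourier transform, no localization, and no hypothesis beyond $u\in C^1$ are needed.

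The concrete defect in your version is that the lemma does \emph{not} assume ${\rm div}\, u = 0$, yet your localization writes $u = {\rm curl}\, z$ on a ball, which is exactly where that hypothesis enters; so your argument only covers divergence-free fields. (That happens to suffice for the application in the proof of Theorem~\ref{thm}, but it is not the stated lemma.) The fix within your own framework is easy: because $\Phi^\alpha_t$ and $(\partial_\alpha\Theta)_t$ are compactly supported, both sides of~(\ref{wkrot}) at a point $x$ depend only on $u$ near $x$, so you may simply replace $u$ by $\chi u$ with a cutoff $\chi\in C_0^\infty$ equal to $1$ on a sufficiently large ball around $x$ and transform that. Note that all the multipliers actually needed here, such as $-t^2\widehat\Theta(t\xi)\langle\xi,e_\alpha\rangle\,\xi$, are smooth polynomials times $\widehat\Theta(t\xi)$; the singular factor $\xi/|\xi|^2$, and hence the ${\rm curl}\,z$ device that the \emph{previous} lemma genuinely requires, never has to appear in this one. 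Better still, skip the Fourier transform entirely and integrate by parts.
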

\begin{proof}
The left hand side of~(\ref{wkrot}) equals
\[
  \Phi^\alpha_t \,\vec{*}\, u = t (\partial (\partial_\alpha\Theta)_t) \,\vec{*}\, u. 
\]
So the desired assertion follows immediately from the identity
\[
  (\partial f) * g = f * \partial g
\]
for scalar functions, in which $f$ should be replaced by $t (\partial_\alpha\Theta)_t$,
and $g$ should be replaced by the components of the vector-function $u$.
\end{proof}

In the proof of the following lemma, we will use the representaion of the orthogonal projection $P$
in terms of Fourier transform ($v\in L_2({\mathbb R}^3; {\mathbb C}^3)$):
\begin{equation}
  \widehat{P v}(\xi) = \frac{\xi}{|\xi|^2}\, \langle\hat v(\xi), \xi\rangle.
  \label{PFourier}
\end{equation}
\begin{lemma}
For $u\in C^1({\mathbb R}^3; {\mathbb C}^3)$, $f\in G_0$, the following equality holds true
\begin{equation}
  \Phi_t * I_u f = t (\partial\Theta)_t * \langle{\rm curl}\, u, f\rangle,   
  \quad  
  t>0.
  \label{Phikrot}
\end{equation}
\end{lemma}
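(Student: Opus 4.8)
The plan is to pass to the Fourier transform on both sides of~(\ref{Phikrot}). Before doing so I would note that every object entering the identity is a genuine $L_2$ function, so its Fourier transform is available despite the possible growth of $u$ at infinity: since $f\in G_0$ is smooth and compactly supported, the field $u\times f$ is of class $C^1$ with compact support, hence lies in $L_2$, so $I_uf=P(u\times f)\in L_2$; likewise $\langle{\rm curl}\, u,f\rangle$ is continuous with compact support, hence in $L_2$; and convolving with the smooth compactly supported functions $\Phi_t$, $(\partial\Theta)_t$ keeps everything in $L_2\cap C^\infty$. (Alternatively, the localization device used in the proof of~(\ref{PsiPhi}), in which one replaces $u$ by $v={\rm curl}\, z$ for a smooth compactly supported $z$ coinciding near a fixed point with a local vector potential of $u$, applies here verbatim and makes even this remark unnecessary.)

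The algebraic heart of the matter is the pointwise identity ${\rm div}(u\times f)=\langle{\rm curl}\, u,f\rangle$, which follows from~(\ref{divcross}) together with ${\rm curl}\, f=0$ (recall that $f=\partial\varphi$ for some $\varphi\in C_0^\infty$ by the definition of $G_0$). On the Fourier side I would compute the left-hand side of~(\ref{Phikrot}) using $\widehat{\Phi_t}(\xi)=\widehat{\Delta\Theta}(t\xi)=-t^2|\xi|^2\widehat\Theta(t\xi)$ together with formula~(\ref{PFourier}) for $P$:
\[
  \widehat{\Phi_t*I_uf}(\xi)=\widehat{\Phi_t}(\xi)\,\frac{\xi}{|\xi|^2}\,\langle\widehat{u\times f}(\xi),\xi\rangle=-t^2\,\widehat\Theta(t\xi)\,\xi\,\langle\widehat{u\times f}(\xi),\xi\rangle.
\]
For the right-hand side I would use $\widehat{(\partial\Theta)_t}(\xi)=\widehat{\partial\Theta}(t\xi)=i\,t\xi\,\widehat\Theta(t\xi)$ and $\widehat{{\rm div}(u\times f)}(\xi)=i\langle\xi,\widehat{u\times f}(\xi)\rangle$ (legitimate since $u\times f\in C^1$ has compact support), together with the identity above:
\[
  \widehat{t\,(\partial\Theta)_t*\langle{\rm curl}\, u,f\rangle}(\xi)=t\cdot i\,t\xi\,\widehat\Theta(t\xi)\cdot i\langle\xi,\widehat{u\times f}(\xi)\rangle=-t^2\,\widehat\Theta(t\xi)\,\xi\,\langle\xi,\widehat{u\times f}(\xi)\rangle.
\]
Since $\langle\cdot,\cdot\rangle$ is symmetric, the two right-hand sides coincide, and injectivity of the Fourier transform on $L_2$ then yields~(\ref{Phikrot}).

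I do not expect a genuine obstacle: the lemma is merely the Fourier-side reflection of the classical identity~(\ref{divcross}), and the explicit factor $t$ on the right of~(\ref{Phikrot}) is exactly what compensates the extra power of $|\xi|$ carried by the second-order multiplier $\widehat{\Delta\Theta}$ relative to the first-order multiplier $\widehat{\partial\Theta}$. The one point that calls for a little care, and is the closest thing here to a difficulty, is the one addressed at the outset, namely making sure the Fourier transforms of $I_uf$ and of $\langle{\rm curl}\, u,f\rangle$ are taken in a setting where the lack of decay of $u$ does no harm; this is guaranteed by the compact support of $f$. (The constants displayed above correspond to the normalization $\widehat{\partial_j g}=i\xi_j\hat g$; any other normalization rescales both sides by the same factor.)
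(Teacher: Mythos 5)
Your proposal is correct and takes essentially the same route as the paper: both pass to the Fourier side, use formula~\re{PFourier} for $P$ and identity~\re{divcross} together with ${\rm curl}\, f=0$ to replace $\langle{\rm curl}\, u,f\rangle$ by ${\rm div}(u\times f)$, and match the resulting multipliers. Your preliminary remark on why the Fourier transforms are legitimate (compact support of $f$ putting $u\times f$ and $\langle{\rm curl}\, u,f\rangle$ in $L_2$) is a correct elaboration of what the paper states more briefly.
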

\begin{proof}
The scalar function $\langle{\rm curl}\, u, f\rangle$ belongs to $L_2({\mathbb R}^3)$, 
so equality~(\ref{Phikrot}) is equivalent to the following equality of functions from $L_2({\mathbb R}^3; {\mathbb C}^3)$ of the variable $\xi$
\begin{equation}
  -t^2 |\xi|^2\widehat\Theta(t \xi) (P(u\times f))\,\hat{}\,(\xi) = 
  t^2 i \xi\, \widehat\Theta(t \xi) (\langle{\rm curl}\, u, f\rangle)\,\hat{}\,(\xi).
  \label{Phikrotprime}
\end{equation}
In view of~(\ref{PFourier}), we have
\[
  |\xi|^2 (P(u\times f))\,\hat{}\,(\xi) = \xi\, \langle(u\times f)\,\hat{}\,(\xi), \xi\rangle.
\]
However, the inner product $\langle(u\times f)\,\hat{}\,(\xi), \xi\rangle$ 
equals Fourier transform of $-i\, {\rm div}(u\times f)$, the latter being an element of $L_2({\mathbb R}^3)$, hence
\[
  \langle(u\times f)\,\hat{}\,(\xi), \xi\rangle = -i ({\rm div}(u\times f))\,\hat{}\,(\xi) = -i (\langle{\rm curl}\, u, f\rangle)\,\hat{}\,(\xi).
\]
In the second equality we used identity~(\ref{divcross}).
Thus we obtain equality~(\ref{Phikrotprime}), and so equality~(\ref{Phikrot}) as well.
\end{proof}

\section{Proof of Theorem~\ref{thm} in the case of smooth $u$}\label{smoothproof} 
In this section, we prove Theorem~\ref{thm} under the additional assumption $u\in C^1({\mathbb R}^3; {\mathbb C}^3)$. 

Let $Q$ be an arbitrary cube in ${\mathbb R}^3$. Fix $\beta \in \{1,2,3\}$ and choose a function $\tilde f\in G_0$ that
is equal to $e_\beta$ on the cube centered at the origin 
with side length $1+\sqrt 3$.
Then for the function
\[
  f(x) = \tilde f\left(\frac{x - c(Q)}{l(Q)}\right)
\]
($c(Q)$ is the center of the cube $Q$), we have
\begin{gather}
  f\big|_{Q'} = e_\beta,  \label{feb}\\
  \|I_u f\|_{L_2} \leqslant \|I_u\| \cdot \|f\|_{L_2} \leqslant C \|I_u\|\cdot |Q|^{1/2},
  \label{Iwf}
\end{gather}
where $Q'$ is the cube centered at $c(Q)$ with side length $(1+\sqrt 3)\, l(Q)$.
Due to~(\ref{Phikrot}), for $\alpha=1,2,3$, $t>0$, we have
\begin{equation}
  \langle\Phi_t * I_u f, e_\alpha\rangle = 
  t (\partial_\alpha\Theta)_t * \langle{\rm curl}\, u, f\rangle. 
  \label{PhikIw}
\end{equation}
In the case $t<l(Q)$,
the convolution occurring on the right hand side being restricted on the cube $Q$ is determined by
the values of the function $\langle{\rm curl}\, u, f\rangle$ on $Q'$,
which follows from~(\ref{PhiTe}).
This can be recorded as follows
\[
  \chi_Q (t (\partial_\alpha\Theta)_t * \langle{\rm curl}\, u, f\rangle) = 
  \chi_Q (t (\partial_\alpha\Theta)_t * (\chi_{Q'} \langle{\rm curl}\, u, f\rangle))
\]
(here and further $\chi_E$ is the characteristic function of a set $E$).
According to~(\ref{feb}), on $Q'$ we have $\langle{\rm curl}\, u, f\rangle = \langle{\rm curl}\, u, e_\beta\rangle$.
Hence the expression we have obtained equals
\[
  \chi_Q \left(t (\partial_\alpha\Theta)_t * (\chi_{Q'} \langle{\rm curl}\, u, e_\beta\rangle)\right) = 
  \chi_Q \left(t (\partial_\alpha\Theta)_t * \langle{\rm curl}\, u, e_\beta\rangle\right). 
\]
Thus the convolution~(\ref{PhikIw}) on the cube $Q$ equals
\[
  t (\partial_\alpha\Theta)_t * \langle{\rm curl}\, u, e_\beta\rangle = \langle{}t (\partial_\alpha\Theta)_t * {\rm curl}\, u, e_\beta\rangle = \langle{}u^\alpha_t, e_\beta\rangle
\]
(the last equality follows from~(\ref{wkrot})).
Therefore
\[
  \|\langle{}u^\alpha_t, e_\beta\rangle\|_Q = \|\langle\Phi_t * I_u f, e_\alpha\rangle\|_Q \leqslant |Q|^{-1/2} \|\langle\Phi_t * I_u f, e_\alpha\rangle\|_{L_2}
  \leqslant |Q|^{-1/2} \|\Phi_t * I_u f\|_{L_2}.
\]
Hence
\begin{multline*}
  \left(\int_0^{l(Q)} \|\langle{}u^\alpha_t, e_\beta\rangle\|_Q^2\, \frac{dt}{t} \right)^{1/2} \leqslant 
  C |Q|^{-1/2} \left(\int_0^{l(Q)} \|\Phi_t * I_u f\|_{L_2}^2\, \frac{dt}{t}\right)^{1/2} \\
  \leqslant C |Q|^{-1/2} \left(\int_0^\infty \|\Phi_t * I_u f\|_{L_2}^2\, \frac{dt}{t} \right)^{1/2} 
  \leqslant
  C |Q|^{-1/2} \|I_u f\|_{L_2} \leqslant 
  C \|I_u\|.
\end{multline*}
In the second to last inequality we applied~(\ref{PhikL2}),
whereas in the last one we used~(\ref{Iwf}).
Now varying $\beta=1,2,3$, yields
\begin{equation}
  \left(\int_0^{l(Q)} \|u^\alpha_t\|_Q^2\, \frac{dt}{t}\right)^{1/2} \leqslant C \|I_u\|.
  \label{wIw}
\end{equation}

Now we are able to estimate the convolutions $\Psi_t * u$.
According to~(\ref{PsiPhi}), we have
\[
  \|\Psi_t * u\|_Q \leqslant \sum_{\alpha=1,2,3} \|\Phi^\alpha_t \,\vec{*}\, u^\alpha_t\|_Q =
  |Q|^{-1/2} \sum_{\alpha=1,2,3} \| \chi_Q (\Phi^\alpha_t \,\vec{*}\, u^\alpha_t)\|_{L_2}.
\]
For $t<l(Q)$, we may apply the equality
\[
  \chi_Q (\Phi^\alpha_t \,\vec{*}\, u^\alpha_t) = \chi_Q (\Phi^\alpha_t \,\vec{*}\, (\chi_{Q'} u^\alpha_t))
\]
(we have already used a similar argument previously),
and proceed with our calculation
\begin{multline*}
  |Q|^{-1/2} \sum_{\alpha=1,2,3} \| \chi_Q (\Phi^\alpha_t \,\vec{*}\, (\chi_{Q'} u^\alpha_t))\|_{L_2} \leqslant
  |Q|^{-1/2} \sum_{\alpha=1,2,3} \|\Phi^\alpha_t \,\vec{*}\, (\chi_{Q'} u^\alpha_t)\|_{L_2} \\
  \leqslant
  C |Q|^{-1/2} \sum_{\alpha=1,2,3} \|\chi_{Q'} u^\alpha_t\|_{L_2} \leqslant C \sum_{\alpha=1,2,3} \|u^\alpha_t\|_{Q'}.
\end{multline*}
Whence
\[
  \left(\int_0^{l(Q)} \|\Psi_t * u\|_Q^2\, \frac{dt}{t} \right)^{1/2} \leqslant
  C \sum_{\alpha=1,2,3} \left(\int_0^{l(Q)} \|u^\alpha_t\|_{Q'}^2\, \frac{dt}{t}\right)^{1/2} \leqslant C \|I_u\|
\]
(the last inequality follows from estimate~(\ref{wIw}) applied to the cube $Q'$).

\section{Proof of Theorem~\ref{thm} in the general case} 
Under the assumptions of Theorem~\ref{thm}, we may turn from $u$ to its smooth approximations $u^\varepsilon = \omega^\varepsilon * u$, $\varepsilon>0$,
where 
\begin{gather*}
  \omega\in C_0^\infty({\mathbb R}^3), \quad
  \omega \geqslant 0, \quad
  \int_{{\mathbb R}^3} \omega(x)\, dx = 1, \quad
  \omega^\varepsilon(x) = \varepsilon^{-3}\omega(\varepsilon^{-1} x).
\end{gather*}
Clearly, the field $u^\varepsilon$ belongs to $C^\infty({\mathbb R}^3; {\mathbb C}^3)$, satisfies condition~(\ref{converge-int})
and the relation ${\rm div} u^\varepsilon = 0$.
Besides, we have $\|I_{u^\varepsilon}\| \leqslant \|I_u\|$.
Indeed, the operator $I_{u^\varepsilon}$ can be represented as follows
\[
  I_{u^\varepsilon} f = \int_{{\mathbb R}^3} \omega^\varepsilon(y) I_{u(\cdot - y)} f dy.
\]
Evidently, $\|I_{u(\cdot - y)}\| = \|I_u\|$, so by applying Minkowski's inequality, we obtain
\begin{multline*}
  \|I_{u^\varepsilon} f\| = \bigg\|\int_{{\mathbb R}^3} \omega^\varepsilon(y) I_{u(\cdot - y)} f dy\bigg\| \leqslant 
  \int_{{\mathbb R}^3} \omega^\varepsilon(y) \|I_{u(\cdot - y)} f\| dy \leqslant
  \int_{{\mathbb R}^3} \omega^\varepsilon(y) \|I_{u(\cdot - y)}\|\, \|f\| dy \\
  = \|I_u\|\, \|f\| \int_{{\mathbb R}^3} \omega^\varepsilon(y) dy = \|I_u\|\, \|f\|.
\end{multline*}
As was shown in sec.~\ref{smoothproof}, we have
\[
  \|u^\varepsilon\|_{\rm BMO} \leqslant C \|I_{u^\varepsilon}\| \leqslant C \|I_u\|.
\]
We will establish the same fact for $u$ with the use of the standard definition of BMO-norm~(\ref{BMOnorm}).
We have $(u^\varepsilon)_Q \to u_Q$, $\varepsilon\to 0$, for any cube $Q$.
Hence by applying Fatou's lemma we obtain
\[
  |Q|^{-1} \int_Q |u(x) - u_Q| dx \leqslant \sup_{\varepsilon > 0} |Q|^{-1} \int_Q |u^\varepsilon(x) - (u^\varepsilon)_Q| dx \leqslant 
  \sup_{\varepsilon > 0} \|u^\varepsilon\|_{\rm BMO} \leqslant C \|I_u\|,
\]
which implies the desired assertion.

\end{document}